\theoremstyle{plain}
\newtheorem{theorem}{Theorem}[section]
\newtheorem{prop}[theorem]{Proposition}
\newtheorem{lemma}[theorem]{Lemma}
\newtheorem{coro}[theorem]{Corollary}
\theoremstyle{definition}
\newtheorem{remark}[theorem]{Remark}
\numberwithin{equation}{section}
\newcommand{\dd}{\,\mathrm{d}}
\newcommand{\ii}{\ts\mathrm{i}}
\newcommand{\ee}{\,\mathrm{e}}
\newcommand{\ts}{\hspace{0.5pt}}
\newcommand{\bigconv}{\raisebox{-3pt}{\mbox{\Huge $*$}}}
\newcommand{\medconv}{\raisebox{-2.5pt}{\mbox{\huge $*$}}}
\newcommand{\ZZ}{\mathbb{Z}}
\newcommand{\RR}{\mathbb{R}\ts}
\newcommand{\NN}{\mathbb{N}}
\newcommand{\TT}{\mathbb{T}}
\newcommand{\exend}{\hfill $\Diamond$}
\newcommand{\myfrac}[2]{\frac{\raisebox{-2pt}{$#1$}}
      {\raisebox{0.5pt}{$#2$}}}
\begin{document}

\title[A measure for Stern's sequence]{A natural probability measure 
derived from\\[2mm]  Stern's diatomic sequence}

\author{Michael Baake} 
\address{Fakult\"{a}t f\"{u}r Mathematik,
  Universit\"{a}t Bielefeld,\newline \hspace*{\parindent}Postfach
  100131, 33501 Bielefeld, Germany}
\email{mbaake@math.uni-bielefeld.de}

\author{Michael Coons}
\address{School of Mathematical and Physical Sciences,
University of Newcastle, 
\newline \hspace*{\parindent}University Drive, Callaghan NSW 2308, Australia}
\email{michael.coons@newcastle.edu.au}

\begin{abstract}
  Stern's diatomic sequence with its intrinsic repetition and
  refinement structure between consecutive powers of $2$ gives rise to
  a rather natural probability measure on the unit interval. We
  construct this measure and show that it is purely singular
  continuous, with a strictly increasing, H\"{o}lder continuous
  distribution function. Moreover, we relate this function
  with the solution of the dilation equation for Stern's diatomic
  sequence.
\end{abstract}

\maketitle
\thispagestyle{empty}

\section{Introduction}

Stern's diatomic sequence $\bigl( s(n) \bigr)_{n\in\NN_0}$, also known
as the Stern--Brocot sequence, is defined by $s(0)=0$, $s(1)=1$
together with the recursions
\begin{equation}\label{eq:def-Stern}
   s(2n) \, = \, s(n)
   \quad \text{and} \quad
   s(2n+1) \, = \, s(n) + s(n+1)
\end{equation}
for $n \in \NN$. This well-studied sequence has fascinating
properties; see entry \textsf{A{\ts}002487} of \cite{oeis} for a
concise summary with many references and links. The initial values and
recursions in Eq.~\eqref{eq:def-Stern} allow one to determine the value
$s(n)$ based on the binary expansion of $n$. In particular, if
\begin{equation}\label{eq:linrec} 
   {S}^{}_0 \, = \,\left(\begin{matrix} 1& 0\\ 
    1& 1\end{matrix}\right),\quad 
   {S}^{}_1 \, = \, \left(\begin{matrix} 1& 1\\ 
   0& 1\end{matrix}\right),\quad {v} \, = \,
   \left(\begin{matrix}1 \\ 0\end{matrix}\right),
\end{equation} 
and if $(n)^{}_2=b^{}_k b^{}_{k-1} \cdots b^{}_1 b^{}_0$ is the binary
expansion of $n$, one has
\begin{equation}\label{eq:form}
  s(n) \, = \, {v}^{T} {S}_{b^{}_k} {S}_{b^{}_{k-1}} \cdots
     {S}_{b^{}_1} {S}_{b^{}_0} {v} \ts .
\end{equation}
Sequences with a linear representation as provided by
Eqs.~\eqref{eq:linrec} and \eqref{eq:form} are called
\mbox{$b\ts$-regular} sequences, where $b$ is the base ($b=2$ for
Stern's diatomic sequence). Regular sequences were introduced by
Allouche and Shallit \cite{AS1992} as a mathematical generalisation of
sequences that are generated by deterministic finite automata, such as
the Thue--Morse sequence.

Here, we reconsider the self-similarity type property of Stern's
diatomic sequence, which manifests itself in the fact that the
sequence, in the range from $2^n$ to $2^{n+1}$, can be seen as a
stretched and interlaced version of what it is between $2^{n-1}$ and
$2^n$. In particular, as follows from a simple induction argument, one
has the well-known summation relation
\begin{equation}\label{eq:sum}
     \sum_{m=2^n}^{2^{n+1} - 1} \!\! s(m) \, = \, 3^n ,
\end{equation}
which holds for all $n\in\NN_0$. Therefore, if we define
\begin{equation}\label{eq:def-meas}
   \mu^{}_{n} \, := \, 3^{-n} \sum_{m=0}^{2^n - 1}
   s(2^n + m) \, \delta^{}_{m / 2^{n}} \ts ,
\end{equation}
where $\delta_x$ denotes the unit Dirac measure at $x$, we can view
$(\mu^{}_{n})^{}_{n\in\NN_0}$ as a sequence of probability measures on
the $1$-torus, the latter written as $\TT=[0,1)$ with addition modulo
$1$. Here, we have simply re-interpreted the values of the Stern
sequence between $2^n$ and $2^{n+1}-1$ as weights of a pure point
probability measure on $\TT$ with
$\mathrm{supp} (\mu^{}_n ) = \big\{ \frac{m}{2^n} : 0 \leqslant m <
2^n \big\}$.

In the remainder of this article, we will study the sequence
$(\mu_{n})^{}_{n\in\NN_{0}}$ and its limit, as well as various
properties of the latter and how they relate to other known results on
Stern's sequence. Our approach is motivated by the similarity of
Eq.~\eqref{eq:def-Stern} with the recursion relation for the
Fourier--Bohr coefficients of the classic Thue{\ts}--Morse measure and
some of its generalisations; see \cite[Sec.~10.1]{TAO} as well as
\cite{BGG} and references therein for background. The idea of studying
the asymptotic behaviour of self-similar sequences by defining a
related measure via renormalisation and Bernoulli convolution has a
rich history, dating back to at least the late 1930s to two papers of
Erd\H{o}s \cite{E1939,E1940}; see \cite{PSS} for a comprehensive
survey.

\section{The probability measure}

Each $\mu^{}_n$ is a probability measure on $\TT$, which in particular
implies that it is Fourier (or Fourier--Stieltjes) transformable,
where
\[
    k \, \mapsto \, \widehat{\mu^{}_n} (k) 
    \, := \int_{\TT}
    \ee^{-2 \pi \ii k x} \dd \mu^{}_{n} (x) 
\]
defines a continuous function on the dual group $\widehat{\TT} = \ZZ$;
see \cite[Sec.~4.4]{RSt} for background.

\begin{remark}\label{rem:period}
  It is sometimes useful to `periodise' the measure $\mu^{}_{n}$
  to $\nu^{}_{n} := \mu^{}_{n} * \delta^{}_{\ZZ}$ and interpret it
  as a translation bounded measure on $\RR$. Its Fourier transform
  is still well defined, via the Poisson summation formula
  $\widehat{\delta^{}_{\ZZ}} = \delta^{}_{\ZZ}$ and the convolution
  theorem; compare \cite[Prop.~8.5 and Sec.~9.2]{TAO}. It then reads
\[
    \widehat{\nu^{}_{n}} \, = \,  \widehat{\mu^{}_n}
    \, \delta^{}_{\ZZ} \, = \sum_{x\in\ZZ}
    \widehat{\mu^{}_n} (x) \, \delta^{}_{x} \ts ,
\]
where $ \widehat{\mu^{}_n}$ now defines a (continuous) function on
$\RR$. The values of $\widehat{\mu^{}_n}$ in the complement
of $\ZZ$ are irrelevant, but still useful; compare
\cite[Sec.~9.2.4]{TAO} for a general interpretation of this
phenomenon.  \exend
\end{remark}

Let us analyse the functions $\widehat{\mu^{}_n}$.
Clearly, one has $\widehat{\mu^{}_{0}}\equiv 1$ and 
\[
    \widehat{\mu^{}_1} (k) \, = \,
    \tfrac{1}{3} \bigl( 1 + 2 \cos (\pi k) \bigr)
    \, = \, \begin{cases} 1, & \text{$k$ even}, \\
         - \frac{1}{3}, & \text{$k$ odd},
    \end{cases}
\]
where $\widehat{\delta_x} (k) = \ee^{-2 \pi \ii kx}$ was used in the
calculation.  More generally, by induction on the basis of
Eqs.~\eqref{eq:def-meas} and \eqref{eq:def-Stern}, one finds
\begin{equation}\label{eq:fin-prod}
    \widehat{\mu^{}_{n}} (k) \, = \prod_{m=1}^{n}
    \tfrac{1}{3} \! \left( 1 + 2 \cos \bigl(
    \tfrac{2 \pi k}{2^m} \bigr) \right)
\end{equation}
for $n\in\NN_0$ and $k\in\ZZ$, where the empty product is defined to
be $1$ as usual. Since
\[
    \tfrac{1}{3} \bigl( 1 + 2 \cos (x) \bigr)
    \, = \, 1 - \tfrac{1}{3} x^2 + O(x^4)
\]
as $\lvert x \rvert \searrow 0$, one can apply standard arguments to
show that, for any fixed $k$, the sequence
$\bigl( \widehat{\mu^{}_n} (k) \bigr)_{n\in\NN_0}$ converges. In fact,
one has compact convergence, both for $k\in\ZZ$ and for $k\in\RR$.
The latter viewpoint is useful in the context of
Remark~\ref{rem:period}, and will be vital later.

Let us now formulate some consequences, where the location
$x$ of the Dirac measure $\delta_x$ is always understood to be an
element of $\TT$, hence taken modulo $1$. This is important to
give the correct meaning to the convolution identity $\delta_x * 
\delta_y =\delta_{x+y}$ on $\TT$.

\begin{prop}\label{prop:gen}
  The sequence\/ $(\mu^{}_n)^{}_{n\in\NN_0}$ of probability measures
  on\/ $\TT$ converges weakly to a probability measure\/ $\mu$.  In
  particular, one has\/ $\mu^{}_0 =\delta^{}_0$ and\/
  $\mu^{}_{n} = \medconv_{m=1}^{n} \, \tfrac{1}{3} \bigl( \delta^{}_0 +
  \delta^{}_{2^{-m}} + \delta^{}_{-2^{-m}} \bigr)$
  for\/ $n\geqslant 1$. The weak limit as\/ $n\to\infty$ is given by
  the convergent infinite convolution product\/
\[
   \mu \,  =  \underset{m\geqslant 1}{\bigconv} 
   \, \tfrac{1}{3} \bigl(
  \delta^{}_0 + \delta^{}_{2^{-m}} + \delta^{}_{-2^{-m}} \bigr).
\]
Its Fourier transform\/ $\widehat{\mu}$ is given by\/
$\widehat{\mu} (k) = \prod_{m\geqslant 1} \frac{1}{3} \bigl( 1 + 2
\cos(2 \pi k/2^m) \bigr)$
for\/ $k\in\ZZ$. Moreover, this infinite product is also well-defined
on\/ $\RR$, where it converges compactly.
\end{prop}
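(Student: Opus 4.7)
The plan is to route everything through the Fourier transform, using the finite product formula in Eq.~\eqref{eq:fin-prod} as a pivot. The Fourier transform of the atomic measure $\frac{1}{3}(\delta_0 + \delta_{2^{-m}} + \delta_{-2^{-m}})$ is precisely $\frac{1}{3}(1 + 2\cos(2\pi k/2^m))$, so by the convolution theorem the Fourier transform of $\medconv_{m=1}^{n} \frac{1}{3}(\delta_0 + \delta_{2^{-m}} + \delta_{-2^{-m}})$ coincides with the right-hand side of Eq.~\eqref{eq:fin-prod}, hence with $\widehat{\mu_n}$. Since probability measures on $\TT$ are uniquely determined by their Fourier coefficients, the finite convolution identity for $\mu_n$ follows, with $\mu_0 = \delta_0$ being the empty convolution.

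For weak convergence, I would first note that, for each fixed $k \in \ZZ$, pointwise convergence of the partial products $\widehat{\mu_n}(k)$ already follows from the discussion preceding the proposition: writing $f_m(k) = \frac{1}{3}(1 + 2\cos(2\pi k/2^m))$, one has $f_m(k) = 1 + O(4^{-m})$ for $m$ large, so $\sum_m \log f_m(k)$ converges absolutely on the relevant tail, while the finite prefix contributes only a bounded constant. Since $\TT$ is compact, the space of probability measures on $\TT$ is weak-$*$ compact, so $(\mu_n)$ has subsequential weak limits; any such limit $\mu$ has Fourier coefficients equal to $\lim_n \widehat{\mu_n}(k)$, which is the same for every subsequence. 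Uniqueness of a measure from its Fourier coefficients then forces all subsequential limits to agree, so the full sequence converges weakly to a single probability measure $\mu$ with the stated Fourier transform. Passing to the weak limit in the finite convolution formula then gives the infinite convolution representation of $\mu$.

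The compact convergence of the infinite product on $\RR$ follows by the same estimate done uniformly: for a compact $K \subset \RR$, choose $M$ so that $|2\pi k/2^m| \leq 1$ whenever $m \geq M$ and $k \in K$; then $|\log f_m(k)| \leq c_K \cdot 4^{-m}$ uniformly on $K$, giving uniform convergence of the tail product $\prod_{m \geq M} f_m(k)$. The finite prefix $\prod_{m=1}^{M-1} f_m(k)$ is a continuous function of $k$, so the full product converges compactly on $\RR$.

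The main obstacle is the step from pointwise Fourier convergence to weak convergence of the measures themselves, with the need to rule out loss of mass and identify the limit as a genuine probability measure. Here the compactness of $\TT$ is doing the heavy lifting: no mass can escape to infinity, and uniqueness of Fourier coefficients then pins down the weak limit. Were we working on $\RR$ instead, tightness would need a separate argument.
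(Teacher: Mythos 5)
Your argument is correct and follows essentially the same route as the paper: identify $\widehat{\mu_n}$ with the finite product via the convolution theorem, establish pointwise (and compact) convergence of the products, and deduce weak convergence of the measures. The only difference is that where the paper simply cites L\'{e}vy's continuity theorem, you prove the relevant compact-group case by hand (weak-$*$ compactness of the probability measures on $\TT$ plus uniqueness of a measure from its Fourier coefficients), which is a legitimate unpacking of the same step rather than a different approach.
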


\begin{proof}
  Due to the convergence of the sequences
  $\bigl( \widehat{\mu^{}_n} (k) \bigr)_{n\in\NN_0}$,
  the first claim can be seen as a consequence of Levy's continuity
  theorem \cite[Thm.~3.14]{BF}. The explicit formula for
  $\mu^{}_n$ follows from Eq.~\eqref{eq:fin-prod} with a simple
  calculation via the inverse of the convolution theorem.

  The representation of $\mu$ is clear, with weak convergence,
  as is the formula for $\widehat{\mu} (k)$ with compact convergence
  of the infinite product as mentioned above.
\end{proof}

Since $\mu$ is a probability measure on $\TT$, Bochner's theorem
\cite[Thm.~3.12]{BF} implies that $k \mapsto \widehat{\mu} (k)$ 
defines a (continuous) positive definite function on $\ZZ$. 
In particular, one has
\begin{equation}\label{eq:symm}
    \widehat{\mu} (-k) \, = \, \overline{\widehat{\mu} (k)}
    \, = \, \widehat{\mu} (k)
\end{equation}
for all $k\in\ZZ$. Here, $\widehat{\mu}$ is real (which gives the second
equality) as a consequence of the invariance of $\mu$ on $\TT$ under
the reflection $x\mapsto -x$, again taken modulo $1$, while the
normalisation of $\mu$ corresponds to $\widehat{\mu} (0) = 1$. The
symmetry relation also implies that
\[
     \widehat{\mu} (k) \, = \int_{\TT} \ee^{2 \pi \ii k x} \dd \mu (x)
     \, =  \int_{\TT} \cos (2 \pi k x) \dd \mu (x)
\]
holds for all $k\in\ZZ$.

The representation of $\mu$ as an infinite convolution
product of pure point measures allows
us to use a result by Jessen and Wintner \cite[Thm.~35]{JW} which
tells us that the spectral type of $\mu$ is pure. By the
general Lebesgue decomposition theorem, this means that 
 $\mu$ is
either  a pure point measure, or purely singular continuous, or
purely absolutely continuous --- but not a mixture. Its remains
to determine the type, for which
we need a scaling property of the Fourier--Bohr coefficients
$\widehat{\mu} (k)$.

\begin{lemma}\label{lem:FB}
  For all\/ $k\in\RR$, the coefficients\/ $\widehat{\mu} (k)$ from
  Proposition~$\ref{prop:gen}$ satisfy 
\[
   \widehat{\mu} (2k) \, = \,
   \tfrac{1}{3} \bigl( 1 + 2 \cos(2 \pi k)\bigr)
     \, \widehat{\mu} (k) \ts.
\]
In particular, $\widehat{\mu} (2k) = \widehat{\mu} (k)$ for all\/
$k\in\ZZ$.
\end{lemma}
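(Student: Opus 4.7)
The plan is to read off the identity directly from the infinite product representation of $\widehat{\mu}$ supplied by Proposition~\ref{prop:gen}, since the entire content of the lemma is an index shift.

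First, I would start from
\[
   \widehat{\mu} (2k) \, = \prod_{m\geqslant 1}
   \tfrac{1}{3} \bigl( 1 + 2 \cos (2 \pi \cdot 2k / 2^m) \bigr)
   \, = \prod_{m\geqslant 1}
   \tfrac{1}{3} \bigl( 1 + 2 \cos (2 \pi k / 2^{m-1}) \bigr),
\]
which is valid for every $k\in\RR$ by the compact convergence statement in Proposition~\ref{prop:gen}. Re-indexing via $m \mapsto m+1$ turns the right-hand side into $\prod_{m\geqslant 0} \tfrac{1}{3} \bigl( 1 + 2 \cos(2\pi k/2^m) \bigr)$, and splitting off the $m=0$ factor yields
\[
   \widehat{\mu}(2k) \, = \, \tfrac{1}{3}\bigl(1 + 2\cos(2\pi k)\bigr) \prod_{m\geqslant 1} \tfrac{1}{3} \bigl( 1 + 2 \cos(2\pi k/2^m) \bigr)
   \, = \, \tfrac{1}{3}\bigl(1 + 2\cos(2\pi k)\bigr) \ts \widehat{\mu}(k),
\]
which is the claimed scaling identity. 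The additional statement for $k\in\ZZ$ follows by evaluating $\cos(2\pi k)=1$, so the prefactor $\tfrac{1}{3}(1+2\cos(2\pi k))$ collapses to $1$.

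The only thing to be slightly careful about is that the re-indexing is legitimate, i.e.\ that one may split the $m=0$ factor out of the product; this is exactly the kind of manipulation that compact convergence (and, in particular, local absolute convergence of $\sum_m \log$) justifies, so no genuine analytic obstacle appears. In short, the lemma is essentially a self-similarity/index-shift identity inherited from the defining product, and no new input beyond Proposition~\ref{prop:gen} is required.
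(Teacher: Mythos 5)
Your proposal is correct and follows essentially the same route as the paper: substitute $2k$ into the infinite product from Proposition~\ref{prop:gen}, shift the index, and split off the $m=0$ factor, with the manipulation justified by the absolute convergence of the product. The evaluation $\cos(2\pi k)=1$ for $k\in\ZZ$ to get the second claim also matches the paper's argument.
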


\begin{proof}
  Since the infinite product representation of $\widehat{\mu}(k)$ is
  absolutely converging by standard arguments, one may simply
  calculate
\[
   \widehat{\mu} (2k) \, = \prod_{m\geqslant 1}
   \tfrac{1}{3} \bigl( 1 + 2 \cos(2 \pi k/2^{m-1}) \bigr)
   \, = \, \tfrac{1}{3} \bigl( 1 + 2 \cos(2 \pi k)\bigr)
   \, \widehat{\mu} (k) \ts ,
\]
which obviously implies both relations.
\end{proof}

The relationship for real $k$ in Lemma~\ref{lem:FB} has some immediate
implications. Let us first note that, for each positive integer $N$,
one has
\begin{equation}\label{eq:2Nk}
   \widehat{\mu} (2^N k) \, = \, \widehat{\mu} (k)\, 
   \prod_{m=1}^{N}  \tfrac{1}{3} 
   \bigl( 1 + 2 \cos(2^m \pi k)\bigr).
\end{equation}
For the proof of our next result, we will require information about
$|\widehat{\mu} (k)|^2$. Since the product on the right hand side of
Eq.~\eqref{eq:2Nk} is symmetric around $\frac{1}{2}$ in $[0,1]$ for
each $N\in\NN$, we can profit from relating values of
$\widehat{\mu}(k)$ with $ 0 \leqslant k \leqslant 2^N$ to values of
$\widehat{\mu}(\kappa)$ with $\kappa \in [0,1]$; see
Figure~\ref{fig:muk01} for an illustration of
$\lvert\widehat{\mu} (\kappa)\rvert$. For larger values of $\kappa$,
the function values $\widehat{\mu} (\kappa)$ are generally
small, with (bounded) negative excursions at powers of $2$.

Let us now observe that, for
$\kappa\in \bigl[ 0, \frac{1}{2} \bigr]$, we clearly have
\begin{equation}\label{eq:2Nkratio-1}
  \frac{\widehat{\mu} \bigl(2^N (1-\kappa )\bigr)}
       {\widehat{\mu} \bigl(2^N \kappa \bigr)}
  \, = \, \frac{\widehat{\mu} (1-\kappa )}
          {\widehat{\mu} (\kappa )} \ts .
\end{equation}
Since $|\widehat{\mu} (\kappa )| \geqslant |\widehat{\mu} 
(1-\kappa )|$ on this interval, we obtain the estimate
\begin{equation}\label{eq:2Nkratio}
   \big\lvert \widehat{\mu} \bigl( 2^N(1- \kappa ) \bigr) 
   \big\rvert  \, \leqslant \:
   \big\lvert \widehat{\mu} \bigl( 2^N \kappa \bigr) 
   \big\rvert \ts ,
\end{equation}
again for $\kappa \in \bigl[ 0, \frac{1}{2} \bigr]$, which implies
$\big\lvert \widehat{\mu} (2^N \! - k)\big\rvert \leqslant \big\lvert
\widehat{\mu} (k)\big\rvert$ for $0\leqslant k \leqslant 2^{N-1}$.

\begin{figure}
  \includegraphics[width=0.7\textwidth]{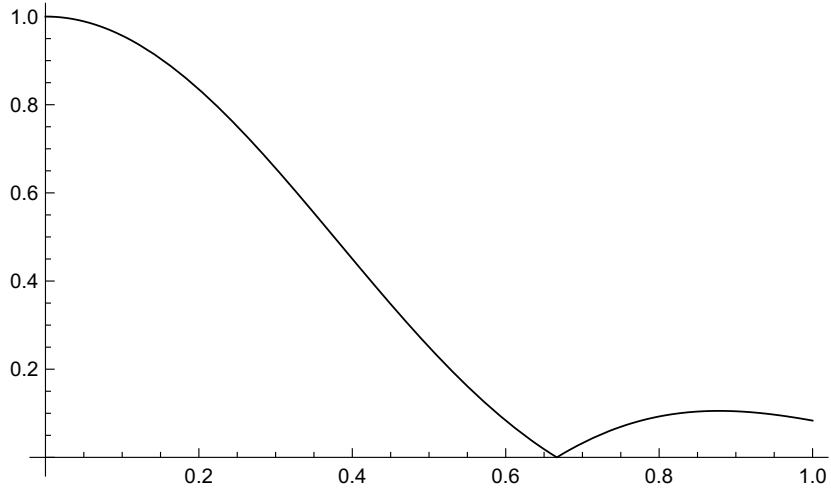}
\caption{The function $|\widehat{\mu} (\kappa)|$ for
  $0 \leqslant \kappa \leqslant 1$.\label{fig:muk01}}
\end{figure}

\begin{theorem}\label{thm:cont}
  The probability measure\/ $\mu$ from Proposition~\emph{$\ref{prop:gen}$} 
  is purely singular continuous.
\end{theorem}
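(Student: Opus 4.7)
My plan is to combine the Jessen--Wintner purity (invoked just before Lemma~\ref{lem:FB}) with separate arguments ruling out the pure point and absolutely continuous alternatives.

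Absolute continuity is ruled out via Riemann--Lebesgue. Lemma~\ref{lem:FB} at integer arguments gives $\widehat{\mu}(2k) = \widehat{\mu}(k)$ for $k \in \ZZ$, hence $\widehat{\mu}(2^N) = \widehat{\mu}(1)$ for every $N \in \NN$. Inspecting the defining product, $\widehat{\mu}(1) = -\tfrac{1}{3} \prod_{m \geqslant 2} \tfrac{1}{3}\bigl(1 + 2\cos(2\pi/2^m)\bigr)$: the $m=1$ factor equals $-1/3$, while the remaining factors are positive with logarithmic deviations from $1$ that are summable in $m$, so the product converges to a nonzero value. Thus $|\widehat{\mu}|$ does not vanish at infinity, which is incompatible with absolute continuity.

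The pure point case is ruled out by showing $\mu(\{y\}) = 0$ for every $y \in \TT$. Associativity of the convolution factorises $\mu = \mu^{}_n * \rho^{}_n$ with tail
$\rho^{}_n := \bigconv_{m > n} \tfrac{1}{3}\bigl(\delta^{}_0 + \delta^{}_{2^{-m}} + \delta^{}_{-2^{-m}}\bigr)$, itself a probability measure on $\TT$ by the argument of Proposition~\ref{prop:gen}. Then
\[
   \mu(\{y\}) \, = \sum_{x \in \mathrm{supp}(\mu^{}_n)} \!\! \mu^{}_n(\{x\}) \, \rho^{}_n(\{y-x\})
   \, \leqslant \, \max_{x} \mu^{}_n(\{x\}) \!\sum_{x \in \mathrm{supp}(\mu^{}_n)} \!\! \rho^{}_n(\{y-x\})
   \, \leqslant \, \max_{x} \mu^{}_n(\{x\}),
\]
since the cosets $y - x$ are distinct in $\TT$ for distinct $x \in \mathrm{supp}(\mu^{}_n)$, so the inner sum is bounded by $\rho^{}_n(\TT) = 1$. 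A short induction on the recursions in \eqref{eq:def-Stern} yields $\max_{0 \leqslant m < 2^n} s(2^n + m) \leqslant 2^n$, so Eq.~\eqref{eq:def-meas} gives $\mu(\{y\}) \leqslant (2/3)^n \to 0$, as required.

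The main obstacle is the continuity step. A Wiener-lemma route via the averaged sum of $|\widehat{\mu}(k)|^2$ looks natural given Lemma~\ref{lem:FB} and Eq.~\eqref{eq:2Nkratio}, but the doubling identity $\widehat{\mu}(2k) = \widehat{\mu}(k)$ on $\ZZ$ together with the estimate \eqref{eq:2Nkratio} only yields $O(1)$ rather than $o(1)$ for the normalised average, so the direct convolution estimate above is the cleaner route. Its only substantive ingredient is the elementary Stern growth bound above. Combining this with the purity and the exclusion of absolute continuity forces $\mu$ to be purely singular continuous.
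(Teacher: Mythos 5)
Your proposal is correct, and while the exclusion of absolute continuity is essentially the paper's own argument (Riemann--Lebesgue together with $\widehat{\mu}(2^N)=\widehat{\mu}(1)\neq 0$ from Lemma~\ref{lem:FB}, here with a clean convergence argument for the infinite product in place of the numerical value), your exclusion of atoms takes a genuinely different and more elementary route. The paper runs Wiener's criterion: using the symmetry estimate \eqref{eq:2Nkratio} and the numerically certified ratio bound $\lvert r(k)\rvert<\frac{1}{4}$, it derives the recursive inequality \eqref{eq:sublinear} and hence geometric decay of the averaged squared Fourier coefficients --- so your aside that this route ``only yields $O(1)$'' undersells it; the bootstrapping is precisely what upgrades $O(1)$ to $o(1)$, and (as the Appendix points out) such quantitative decay also feeds into lower bounds for the H\"older exponent of $F$. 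Your alternative --- factor $\mu=\mu^{}_n * \rho^{}_n$ by associativity of the infinite convolution, bound every atom of $\mu$ by the largest atom of $\mu^{}_n$, and control that by the elementary bound $\max_{0\leqslant m<2^n} s(2^n+m)\leqslant 2^n$ (a one-line induction; the true maximum is even a Fibonacci number, so one gets $O\bigl((\tau/3)^n\bigr)$) --- is sound: the factorization is justified exactly as in Proposition~\ref{prop:gen} (the Fourier transforms multiply), the atoms of $\mu^{}_n$ sit at distinct points $m/2^n$, and $\mu(\{y\})\leqslant (2/3)^n\to 0$ kills all atoms, which together with Jessen--Wintner purity and the failure of absolute continuity forces pure singular continuity. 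This is close in spirit to, but simpler than, the first alternative proof in the paper's Appendix, which also exploits the convolution structure but needs the Jessen--Wintner absolute-convergence theorem and a counting trick; what you give up relative to the main proof is the quantitative Fourier-average information, what you gain is a short argument with an explicit atom bound and no recourse to numerics or to Figure~\ref{fig:muk01}.
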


\begin{proof}
  Clearly, $\mu\ne 0$, and the spectral type of $\mu$ is pure by
  \cite[Thm.~35]{JW}. Thus, we can prove the result by showing that
  $\mu$ is neither absolutely continuous nor pure point.

  Since $0 \ne \widehat{\mu} (1) = \widehat{\mu} (-1) \approx -
  0.083{\ts}432$
  and $\widehat{\mu} (2k) = \widehat{\mu} (k)$ for all $k\in\ZZ$ by
  Lemma~\ref{lem:FB}{\ts}, the Fourier coefficients cannot decay as
  $\lvert k \rvert \to \infty$. Consequently, $\mu$ cannot be absolutely
  continuous by the Riemann--Lebesgue lemma; compare
  \cite[Thm.~4.4.3]{RSt}.

  To rule out a pure point nature of $\mu$, we employ Wiener's
  criterion; see \cite[Prop.~8.9]{TAO}. Because
  $\widehat{\mu} (k)=\widehat{\mu} (-k)$ for all $k$, the theorem will
  follow if $\sum_{k\leqslant x}|\widehat{\mu} (k)|^2 = o(x)$ as
  $x\to\infty$, where here and below the summation for $k$ starts at
  $0$ unless specified otherwise. Moreover, when $x\in[2^N,2^{N+1}]$,
  one has the estimate
\[
  \myfrac{1}{x}\sum_{k\leqslant x}|\widehat{\mu} (k)|^2
  \, \leqslant \,
  \myfrac{1}{2^N}\sum_{k=0}^{2^{N+1}}|\widehat{\mu} (k)|^2
  \, \leqslant \,
  \myfrac{2}{2^{N+1}}\sum_{k=0}^{2^{N+1}}|\widehat{\mu} (k)|^2,
\]
which implies that it suffices to show
$\sum_{k\leqslant 2^{N+1}}|\widehat{\mu} (k)|^2=o \bigl( 2^{N} \bigr)$.

To this end, note that using Eq.~\eqref{eq:2Nkratio-1} we have
\[
 \sum_{k\leqslant 2^N}|\widehat{\mu} (k)|^2 
  \,  =\sum_{k\leqslant \tfrac{3}{5}\ts 2^N}|\widehat{\mu} (k)|^2
   \, + \sum_{\tfrac{3}{5}\ts  2^N< k\leqslant 2^N}|\widehat{\mu} (k)|^2
   \, \leqslant \sum_{k\leqslant \tfrac{3}{5}\ts  2^N}
   |\widehat{\mu} (k)|^2 \, + \sum_{k\leqslant \tfrac{2}{5}\ts  2^N}
   | r(k) \, \widehat{\mu} (k) |^2
\]
where, due to $0 \leqslant k \leqslant \frac{2}{5} \ts\ts 2^N$,
\[
\begin{split}
  \lvert r(k) \rvert \, & = \, \Bigl| \frac{\widehat{\mu} 
  (2^N \! - k)} {\widehat{\mu} (k)} \Bigr| \, = \, 
  \Bigl| \frac{\widehat{\mu} (1- k/2^N )}
  {\widehat{\mu} (k/2^N )} \Bigr| \, \leqslant \,
  \frac{\max_{\ts 0 \leqslant \ell \leqslant \frac{2}{5} \ts 2^N} 
         \big|\widehat{\mu} (1-\ell/2^N)\big|}
         {\min_{\ts 0\leqslant \ell \leqslant \frac{2}{5}\ts  2^N} 
         \big|\widehat{\mu} (\ell/2^N )\big|} \\[1mm]
  & \leqslant \, 
  \frac{\max_{\kappa \in [\frac{3}{5} , 1]} 
         \big|\widehat{\mu} (\kappa )\big|}
         {\ts \min_{ \kappa \in [\ts 0, \frac{2}{5}]} 
         \big|\widehat{\mu} (\kappa)\big|}
  \, = \, \frac{ \big\lvert \widehat{\mu} (t) \big\rvert }
        {\big\lvert \widehat{\mu} 
        (\tfrac{2}{5}\bigr) \big\rvert} \, = \,
  \frac{0.105{\ts}423{\ts}890\ldots}{0.450{\ts}342{\ts}617\ldots}
  \, < \, \myfrac{1}{4} \ts ,
\end{split}
\]
with $t=0.877{\ts}996{\ts}139\ldots$ being the position of the unique
(relative) maximum of $\lvert \widehat{\mu} \rvert$ in the interval
$\bigl[ \frac{3}{5}, 1 \bigr]$; compare Figure~\ref{fig:muk01}. So, we
get
\begin{align*}
   \sum_{k\leqslant 2^N}|\widehat{\mu} (k)|^2 \;
    & < \sum_{k\leqslant \frac{3}{5}\ts  2^N}
      |\widehat{\mu} (k)|^2 \, + \myfrac{1}{16}
      \sum_{k\leqslant \frac{2}{5}\ts  2^N}
      |\widehat{\mu} (k)|^2\\[1mm]
    & = \: \myfrac{17}{16}\sum_{k\leqslant 2^{N-1}}
    |\widehat{\mu} (k)|^2 \, +
    \sum_{2^{N-1}<k\leqslant \frac{3}{5}\ts  2^N}
    |\widehat{\mu} (k)|^2 \, - \, \myfrac{1}{16}
    \sum_{\frac{2}{5}\ts  2^N<k\leqslant 2^{N-1}}|\widehat{\mu} (k)|^2.
\end{align*}
To obtain an upper estimate of the last two sums in the previous line,
we write $2^N = 2 \cdot 2^{N-1}$ and use Eq.~\eqref{eq:2Nkratio} to get
\begin{align*}
  \sum_{2^{N-1}<k\leqslant \frac{6}{5}\ts 2^{N-1}}
   |\widehat{\mu} (k)|^2 \, & - \, \myfrac{1}{16}
   \sum_{\frac{4}{5}\ts 2^{N-1}<k\leqslant 2^{N-1}}|\widehat{\mu} (k)|^2 \\[1mm]
  &\leqslant \sum_{\frac{4}{5}\ts 2^{N-1}<k\leqslant 2^{N-1}}
     |\widehat{\mu} (k)|^2 \, - \, \myfrac{1}{16}
    \sum_{\frac{4}{5}\ts 2^{N-1}<k\leqslant 2^{N-1}}|\widehat{\mu} (k)|^2 \\[1mm]
  &= \, \myfrac{15}{16}\sum_{\frac{4}{5}\ts 2^{N-1}<k\leqslant 2^{N-1}}
     |\widehat{\mu} (k)|^2\\[1mm]
  &\leqslant \, \myfrac{15}{16}\sum_{k\leqslant 2^{N-1}}
     |\widehat{\mu} (k)|^2 \, - \, \myfrac{15}{16}
    \sum_{k\leqslant 2^{N-2}}|\widehat{\mu} (k)|^2.
\end{align*}
With $\varSigma^{}_N := \tfrac{1}{2^N}\sum_{k\leqslant 2^N}|\widehat{\mu}
(k)|^2$, inserting the last estimate into the previous one gives
\begin{equation}\label{eq:sublinear}
  \varSigma^{}_N \, < \, \varSigma^{}_{N-1}
   -\myfrac{15}{64} \ts \varSigma^{}_{N-2} \ts .
\end{equation}
Substituting the recursive inequality \eqref{eq:sublinear} for
$\varSigma^{}_{N-1}$ with $N\geqslant 2$ results in
\begin{equation*}
   \varSigma^{}_N \, < \, \myfrac{49}{64}\ts 
   \varSigma^{}_{N-2} - \myfrac{15}{64}\ts \varSigma^{}_{N-3}
   \,\leqslant\, \myfrac{49}{64}\ts \varSigma^{}_{N-2} \ts ,
\end{equation*} 
where $\varSigma^{}_{-1}:=0$. Consequently, for $N\geqslant 2$, we
obtain
\[
   0 \, \leqslant \, \varSigma^{}_N \, < \, 
   \Bigl(\myfrac{7}{8}\Bigr)^{N-1} 
   \max \big\{ \varSigma^{}_0, \varSigma^{}_1 \big\} 
   \, \xrightarrow{\, N\to\infty\,} \, 0 \ts ,
\]
which proves the absence of pure point components for $\mu$,
and completes our argument.
\end{proof}

\section{The distribution function}

Here, we are in a situation that is somewhat similar to that of the
singular continuous diffraction measures known from the spectral
theory of certain substitution systems; compare \cite{squiral} and
references therein. First of all, the distribution function
\begin{equation}\label{eq:distmu}
    F(x) \, := \, \mu \bigl( [0,x] \bigr)
\end{equation}
with $x\in\TT$, where $F(1):= 1$, defines a continuous function
that is monotonically increasing on the unit interval. It is
illustrated in Figure~\ref{fig:dist}.

\begin{figure}
  \includegraphics[width=0.7\textwidth]{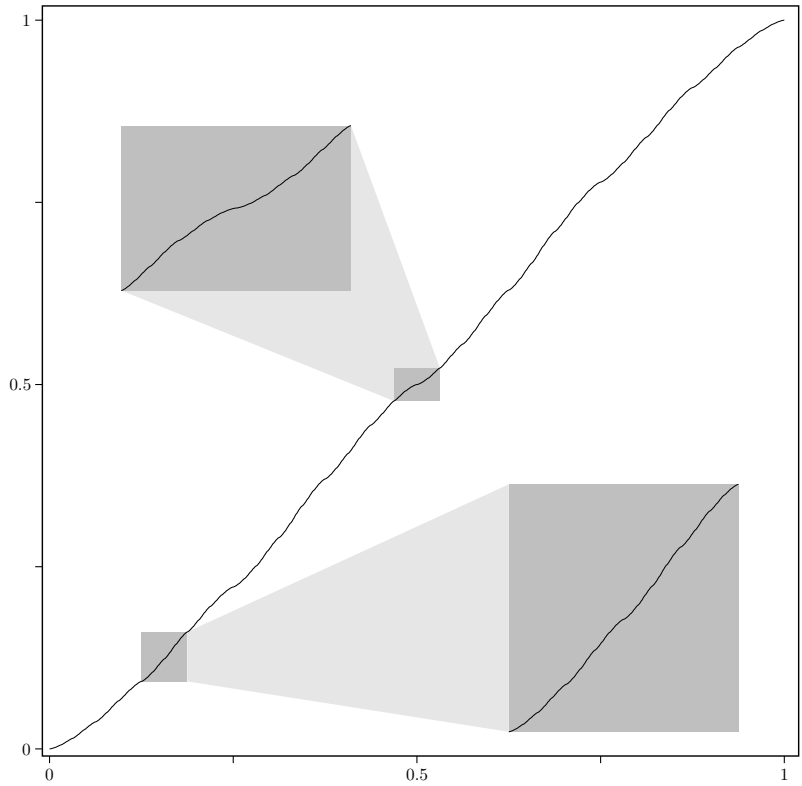}
  \caption{The distribution function $F$ of the purely singular
    continuous probability measure $\mu$ derived from Stern's
    diatomic sequence.\label{fig:dist}}
\end{figure}

It is clear from Theorem~\ref{thm:cont}
that $F$ is a non-decreasing, continuous function. Next,
we will show that $F$ is strictly increasing. To do this, we need some
specific asymptotics on the summatory function of Stern's sequence. In
what follows, for a positive real number $y$, we use
$\lfloor y\rfloor$ and $\langle y \rangle$ 
to denote the integer and the fractional part\footnote{We use this
  version for the fractional part because the more common notation,
  $\{ y \}$, represents a singleton set in our measure-theoretic 
  arguments.} of $y$, respectively. In particular,
$y=\lfloor y\rfloor+\langle y \rangle$. Also, we write $\log_2 (y)$
for the base-$2$ logarithm of $y$ and let
$\tau=\bigl(1+\sqrt{5}\, \bigr)/2$ denote the golden ratio.

\begin{prop}\label{prop:s} 
  If\/ $\bigl(s(n)\bigr)_{n\geqslant 0}$ is Stern's diatomic sequence, its
  summatory function satisfies
\[
   \sum_{n\leqslant x} s(n) \, = \, 3^{\lfloor \log_2 (x)\rfloor+1} \,
    f^{}_{0} \big( 2^{\langle \log_2 (x) \rangle - 1} \big)\ts + 
    O\big(x^{\log_2 (\tau) + \ts \varepsilon}\big),
\]
where the function\/ $f^{}_{0} $ is H\"older continuous with
exponent\/ $\log_2(3/\tau)$. Moreover, $f^{}_0 (t)$ is the first
coordinate of the column vector\/
${f}(t)= \bigl(f^{}_0(t) , f^{}_1(t)\bigr)^T$ that is the unique
solution of the dilation equation
\[
   {f}(t) \, = \, \myfrac{1}{3}\ts \bigl(
   {S}^{}_{0} \ts {f}(2t) + {S}^{}_{1} \ts {f}(2t-1)\bigr) ,
\]
with the conditions\/ $f^{}_0 (t) = f^{}_1 (t) = 0$ for\/
$t\leqslant 0$ and\/ $f^{}_0 (t) = f^{}_1 (t) = \frac{1}{2}$ for\/
$t\geqslant 1$.
\end{prop}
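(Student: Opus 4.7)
The plan is to encode the summatory function $\Sigma(N) := \sum_{n \leqslant N} s(n)$ as the first coordinate of a two-dimensional vector satisfying a self-similar recursion inherited from \eqref{eq:def-Stern}, and then to pass to a renormalised limit that yields the claimed dilation equation. The natural renormalisation is by $3^N$, reflecting \eqref{eq:sum}. From \eqref{eq:def-Stern} one directly obtains
\[
   \Sigma(2m) \, = \, 3\Sigma(m) - s(m) \quad \text{and} \quad
   \Sigma(2m+1) \, = \, 3\Sigma(m) + s(m+1) \ts ,
\]
which I would package into a vector recursion for some $U(m) = \bigl(\Sigma(m),\, \ldots\bigr)^T$, in a basis chosen so that the matrices appearing on the right are exactly $S^{}_0$ and $S^{}_1$. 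Setting $a^{}_N(t) := 3^{-N} U(\lfloor 2^N t \rfloor)$, the step from level $N-1$ to level $N$ then reads
\[
  a^{}_N(t) \, = \, \tfrac{1}{3} \bigl( S^{}_0 \ts a^{}_{N-1}(2t) + S^{}_1 \ts a^{}_{N-1}(2t-1) \bigr)
\]
for $t \in [0,1]$, with $a^{}_{N-1}$ extended by the boundary conditions, up to a correction of order $3^{-N} s(\lfloor 2^N t\rfloor)$ per step coming from the $\pm s(m)$ terms above.

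Next I would establish existence, uniqueness, and convergence. The dilation operator
\[
   (\Phi g)(t) \, = \, \tfrac{1}{3}\bigl( S^{}_0 \, g(2t) + S^{}_1 \, g(2t-1) \bigr),
\]
acting on bounded $\RR^2$-valued functions on $\RR$ with the prescribed boundary values, admits a unique bounded fixed point. A quick argument: any two bounded solutions differ by a function $h$ vanishing off $(0,1)$ with $h = \Phi h$; iterating $n$ times expresses $h(t)$ as $3^{-n} S^{}_{b^{}_n}\cdots S^{}_{b^{}_1} h(u)$ for some binary word and $u \in [0,1]$, and this is forced to zero by the bound $\lVert S^{}_{b^{}_n}\cdots S^{}_{b^{}_1}\rVert \leqslant C \tau^n$ combined with $\tau/3 < 1$. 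The same contraction shows that $a^{}_N$ from the previous step converges uniformly to the unique fixed point $f$.

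Hölder continuity and the error term both rest on the same matrix-product bound. For $t, t'$ in a common dyadic subinterval of length $2^{-n}$, iterating the dilation equation $n$ times expresses $f(t) - f(t')$ as $3^{-n} S^{}_{b^{}_n} \cdots S^{}_{b^{}_1} \bigl( f(u) - f(u') \bigr)$ for some word $b^{}_1 \cdots b^{}_n$ and $u, u' \in [0,1]$. With the uniform-in-word bound $\lVert S^{}_{b^{}_n}\cdots S^{}_{b^{}_1}\rVert \leqslant C \tau^n$, one obtains $\lvert f(t) - f(t') \rvert \leqslant C(\tau/3)^n$, which translates into the claimed sharp Hölder exponent $\log_2(3/\tau)$. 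Feeding the same matrix bound into the accumulated error from the renormalised recursion gives $\lvert a^{}_N(t) - f(t) \rvert = O(\tau^{N + \varepsilon N})$ uniformly; multiplying back by $3^{N+1}$ and using $x = 2^{N + \langle \log_2 x \rangle}$ then produces the error term $O(x^{\log_2 \tau + \varepsilon})$.

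The technical heart of the proof is the uniform-in-word bound $\lVert S^{}_{b^{}_n} \cdots S^{}_{b^{}_1} \rVert \leqslant C \tau^n$ with constant $C$ independent of $b^{}_1, \ldots, b^{}_n$. This reflects the Fibonacci-type growth of Stern's sequence: the entries of such products are themselves Stern values via \eqref{eq:form}, and $\max_{m < 2^k} s(m) = F^{}_{k+1}$, realised along the alternating binary word that encodes the continued-fraction expansion of $\tau$. A merely subexponential improvement on the joint spectral radius would only yield Hölder exponent $\log_2(3/\tau) - \varepsilon$; pinning down the precise $C\tau^n$ behaviour is what makes the exponent in the proposition sharp. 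Once that input is in place, the remainder is a standard renormalisation argument of the kind familiar from dilation equations and refinable functions.
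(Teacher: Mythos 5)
Your route is genuinely different from the paper's: the paper proves this proposition by invoking Dumas's theorem \cite{D2013} on radix-rational sequences, together with the cited facts that $\{S_0,S_1\}$ has joint spectral radius $\tau$ with the finiteness property and that $Q=S_0+S_1$ has eigenvalues $3$ and $1$; you instead attempt a self-contained renormalisation argument. Several of your ingredients are sound: the identities $\Sigma(2m)=3\Sigma(m)-s(m)$ and $\Sigma(2m+1)=3\Sigma(m)+s(m+1)$ are correct; uniqueness of the bounded solution of the dilation equation via $\lVert S_{b_n}\cdots S_{b_1}\rVert\leqslant C\tau^n$ and $\tau/3<1$ is fine; the H\"older argument on common dyadic intervals works (the inhomogeneous boundary contribution cancels in differences); and the sharp bound $C\tau^n$ (rather than $C_\varepsilon(\tau+\varepsilon)^n$) does hold here because the entries of such products are Stern/continuant values bounded by Fibonacci numbers.

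The gap sits exactly in the step that carries the content: passing from your displayed recursions to the renormalised step $a_N(t)=\tfrac{1}{3}\bigl(S_0\,a_{N-1}(2t)+S_1\,a_{N-1}(2t-1)\bigr)+\text{error}$ with $a_N(t)=3^{-N}U(\lfloor 2^Nt\rfloor)$ and $U(m)=(\Sigma(m),\ldots)^T$. This cannot hold: for $t\in(0,\tfrac{1}{2}]$ one has $\lfloor 2^Nt\rfloor=\lfloor 2^{N-1}(2t)\rfloor$ and $a_{N-1}(2t-1)=0$, so the claimed relation forces $U(m)\approx S_0U(m)$; the second coordinate of that identity forces $\Sigma(m)$ to be of the size of the error term, whereas $\Sigma(\lfloor 2^Nt\rfloor)\asymp 3^N$ for fixed $t>0$, and a correction of order $3^{-N}s(\lfloor 2^Nt\rfloor)=O((\tau/3)^N)$ per step cannot absorb this. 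The structural reason is a digit-order mismatch: your recursions strip the least significant bit of $m$ (and there both branches carry the same factor $3$ on $\Sigma(m)$, with no $S_0/S_1$ dichotomy), while the maps $t\mapsto 2t,\,2t-1$ in the dilation equation strip the most significant bit of $\lfloor 2^Nt\rfloor$; note also that $f_1$ is not a companion sum such as $\lim 3^{-N}\sum_{m<2^Nt}s(m+1)$ (that limit equals $f_0$, whereas $f_1(\tfrac{1}{2})=\tfrac{1}{3}\neq\tfrac{1}{6}$), so no choice of second coordinate in your $U(m)$ produces the stated step. The repair is to split the sum $\sum_{m<2^Nt}s(m)$ according to the leading binary digit and to sum the representation vectors $S_{b_{N-1}}\cdots S_{b_0}v$ themselves (equivalently, to use the augmented $3\times3$ matrices appearing in the paper's proof of Theorem~\ref{thm:strict}), and then to run your contraction and joint-spectral-radius estimates on that MSB-first recursion --- at which point you are essentially re-proving Dumas's theorem in this special case, which would indeed be a legitimate self-contained alternative to the paper's citation. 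Finally, your error bookkeeping is off: you need the renormalised error $\lvert a_N(t)-f(t)\rvert=O\bigl((\tau^{1+\varepsilon}/3)^N\bigr)$, not $O(\tau^{N+\varepsilon N})$, before multiplying by $3^{N+1}$ to obtain $O\bigl(x^{\log_2(\tau)+\varepsilon}\bigr)$.
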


\begin{proof}[Sketch of proof]
  This result follows from a method of Dumas. In particular, it
  follows from an application of \cite[Thm.~3]{D2013}, when using the
  linear representation of the Stern sequence \eqref{eq:form} along
  with the facts that the set $\{{S}^{}_0 , {S}^{}_1\}$ satisfies
  the finiteness property with joint spectral radius equal to the
  golden ratio (see \cite{CIJFCS,CT2014}), that
  ${Q} := {S}^{}_0 + {S}^{}_1$ has eigenvalues $3$ and $1$ with
  Jordan basis
  ${v}^{}_{3} = \bigl(\frac{1}{2}, \frac{1}{2}\bigr)^T$ and
  ${v}^{}_{1} = \bigl( \frac{1}{2}, -\frac{1}{2} \bigr)^T$, and
  that ${v} = {v}^{}_{3} + {v}^{}_{1}$.
\end{proof}

\begin{figure}[htp]
   \includegraphics[width=0.7\textwidth]{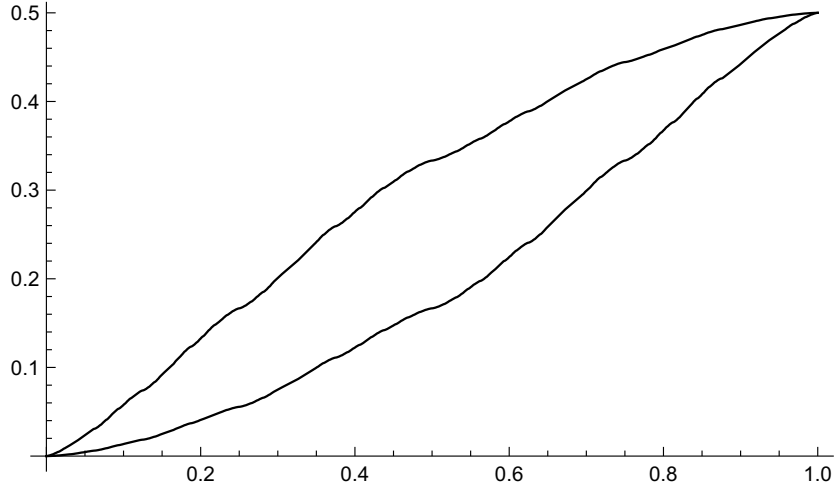}
\caption{The functions $f^{}_{0}$ (lower curve) and
  $f^{}_{1}$ (upper curve).}
\label{Sternperiod}
\end{figure}

\begin{theorem}\label{thm:strict} 
  The distribution function\/ $F$ from Eq.~\eqref{eq:distmu} is strictly
  increasing.
\end{theorem}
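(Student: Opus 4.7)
The plan is to reduce the claim to the statement that $\mu$ has full topological support on $\TT$, and then to verify this directly using the infinite convolution structure provided by Proposition~\ref{prop:gen}.

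First, because $F$ is continuous by Theorem~\ref{thm:cont}, strict increase of $F$ on $[0,1]$ is equivalent to $\mu\bigl((a,b)\bigr) > 0$ for every non-empty open interval $(a,b) \subseteq \TT$, i.e.\ to $\mathrm{supp}(\mu) = \TT$. Indeed, $F(b) - F(a) = \mu\bigl((a,b]\bigr) = \mu\bigl((a,b)\bigr)$, where the last equality uses the absence of atoms implied by continuity of $F$. Hence it is enough to show that $\mu$ has full support.

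Next I would exploit the factorisation $\mu = \mu^{}_{N} * \rho^{}_{N}$ for each $N \in \NN$, with
\[
   \rho^{}_{N} \, := \, \underset{m > N}{\bigconv} \,
   \tfrac{1}{3} \bigl( \delta^{}_{0} + \delta^{}_{2^{-m}}
   + \delta^{}_{-2^{-m}} \bigr),
\]
which is immediate from Proposition~\ref{prop:gen}. The support of $\mu^{}_{N}$ is the full set of dyadic rationals $D^{}_{N} := \bigl\{m/2^N : 0 \leqslant m < 2^N\bigr\}$, since the weight at $m/2^N$ equals $s(2^N + m)/3^N > 0$ (Stern's sequence never vanishes on $\NN$). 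The convolution formula then gives, for $x^{}_0 = m^{}_0/2^N \in D^{}_{N}$ and any $\varepsilon > 0$,
\[
    \mu \bigl( B(x^{}_0, \varepsilon) \bigr) \, \geqslant \,
    \frac{s(2^N + m^{}_0)}{3^N} \,
    \rho^{}_{N} \bigl( B(0, \varepsilon) \bigr) .
\]
A short computation establishes $\rho^{}_{N}\bigl(B(0, \varepsilon)\bigr) > 0$: choosing $M \geqslant N$ with $2^{-M} < \varepsilon$ and splitting off the finite convolution over $N < m \leqslant M$, the atom of mass $3^{-(M-N)}$ at $0$ of that finite factor, combined with the remaining infinite tail being a probability measure supported inside $[-2^{-M}, 2^{-M}] \subseteq B(0,\varepsilon)$, yields the lower bound $3^{-(M-N)}$. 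Thus $D^{}_{N} \subseteq \mathrm{supp}(\mu)$ for every $N$, and since $\mathrm{supp}(\mu)$ is closed and $\bigcup_{N} D^{}_{N}$ is dense in $\TT$, one concludes $\mathrm{supp}(\mu) = \TT$.

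I do not anticipate a serious obstacle. The only non-trivial ingredients beyond Proposition~\ref{prop:gen} are the positivity of Stern's sequence on $\NN$ (an immediate induction from \eqref{eq:def-Stern}) and the elementary positivity of $\rho^{}_{N}$ near the origin. Conceptually, the argument reflects that the symbol alphabet $\{-1, 0, 1\}$ in each convolution factor, together with the scaling $2^{-m}$, is rich enough to place mass arbitrarily close to every point of $[0,1]$ via binary expansions, so that full support is essentially forced by density.
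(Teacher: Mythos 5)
Your argument is correct, but it takes a genuinely different route from the paper. You reduce strict monotonicity to $\mathrm{supp}(\mu)=\TT$ and verify full support directly from the factorisation $\mu=\mu^{}_{N}*\rho^{}_{N}$ of the infinite convolution in Proposition~\ref{prop:gen}, using only the positivity of Stern's sequence and the fact that the tail $\bigconv_{m>M}$ is a probability measure concentrated in $[-2^{-M},2^{-M}]$; the estimate $\mu\bigl(B(m^{}_0/2^N,\varepsilon)\bigr)\geqslant s(2^N+m^{}_0)\,3^{-N}\cdot 3^{-(M-N)}$ is sound (note $(\nu*\sigma)(A)\geqslant\nu(\{0\})\,\sigma(A)$), and the splitting $\mu=\mu^{}_N*\rho^{}_N$ is legitimate, e.g.\ because the Fourier coefficients multiply and determine measures on $\TT$; incidentally, the paper's appendix uses exactly this kind of factorisation for the continuity proof, so your argument is very much in that spirit. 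The paper proves the theorem quite differently: it shows $\mu\bigl([2m/2^k,(2m+1)/2^k]\bigr)>0$ for dyadic intervals by combining Eq.~\eqref{eq:def-meas} with the asymptotics of the summatory function from Proposition~\ref{prop:s}, which yields the identity \eqref{eq:fdiffm} expressing such measures as differences of the dilation-equation solution $f^{}_0$, and then obtains positivity by iterating a $3\times 3$ matrix recursion built from $S^{}_0,S^{}_1$, with explicit lower bound $3^{1-k}/6$. Your route is more elementary and self-contained (no Dumas-type asymptotics, no dilation equation) and gives a comparable quantitative bound ($\geqslant 3^{-M}$ for a ball of radius about $2^{-M}$); what it does not buy is the formula \eqref{eq:fdiffm} relating $\mu$ to $f^{}_0$, which the paper reuses immediately afterwards for Corollary~\ref{coro:holder} and for the identity $F(x)=f^{}_0(x)+f^{}_1(x)$. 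If you only want Theorem~\ref{thm:strict}, your proof suffices; just add a line justifying $\mu=\mu^{}_N*\rho^{}_N$ (existence of $\rho^{}_N$ as a weak limit and uniqueness via Fourier coefficients) rather than calling it immediate.
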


\begin{proof} 
  Let $x\in [0,1)$ and $\varepsilon>0$ with $x+\varepsilon\leqslant 1$
  be arbitrary. Since $F$ is continuous and non-decreasing, we have
  that
\[
   F(x+\varepsilon)-F(x) \, = \, \mu([0,x+\varepsilon])-\mu([0,x])
   \, = \, \mu([x,x+\varepsilon]) \, \geqslant \, 0 \ts .
\]
Thus, to prove the theorem, it is enough to show that
$\mu([x,x+\varepsilon]) > 0$. Note also that, since the dyadic
rationals are dense in $[0,1]$, there exist positive integers $m$ and
$k$ so that $[2m/2^k, (2m+1)/2^k]\subseteq [x,x+\varepsilon]$. Thus,
to show that $\mu([x,x+\varepsilon]) > 0$, it suffices to show that
$\mu([2m/2^k, (2m+1)/2^k]) > 0$.

To this end, observe that, for any real number $y\in[2^n,2^{n+1})$, we
have $\lfloor \log_2 (y) \rfloor =n$. Thus, combining
Eq.~\eqref{eq:def-meas} with Proposition~\ref{prop:s} gives
\begin{align*}
  \mu_n \bigl( [2m/2^k, (2m+1)/2^k] \bigr) & = \, 
   3^{-n} \sum_{\ell=2^n+2^n(2m/2^k)}^{2^n +2^n((2m+1)/2^k)} s(\ell)\\[2mm]
   & = \, 3 \ts f^{}_{0} 
     \big(2^{\langle \log_2 (2^n +2^n((2m+1)/2^k))\rangle-1}\big) -
     3\ts f^{}_{0}\big(2^{\langle \log_2 (2^n+2^n(2m/2^k))\rangle-1}\big)\\
   &\qquad\qquad + O_n \left( 3^{-n} \bigl(
     2^n +2^n((2m+1)/2^k) \bigr)^{\log_2 (\tau)
          +\varepsilon} \right) ,
\end{align*} 
where we have used the notation $O_n$ to indicate the dependence on
$n$. Since $m$ and $k$ are fixed, one has
\[
    O_n \big((2^n +2^n((2m+1)/2^k))^{\log_2 (\tau)
    +\varepsilon}\, 3^{-n}\big) \, = \, O_n\big(2^{n(\log_2 (\tau)
    +\varepsilon)}\, 3^{-n}\big) \, = \, o_n(1) \ts , \quad
    \text{as $n \to\infty$,}
\]
and, for any $y\in(0,1)$,
\[
  \langle \log_2 (2^n+2^ny)\rangle \, = \,
    \langle n+\log_2 (1+y)\rangle
   \, = \, \log_2 (1+y) \ts .
\]
Continuing the above then gives 
\[
   \mu_n \bigl( [2m/2^k, (2m+1)/2^k ] \bigr) \, = \, 3\ts
    f^{}_{0} \left(\frac{((2m+1)/2^k)+1}{2}\right) - 
     3\ts f^{}_{0} \left(\frac{(2m/2^k)+1}{2}\right) + o_n(1) \ts .
\]
Taking the limit as $n$ goes to infinity gives
\begin{equation}\label{eq:fdiffm}
   \mu \bigl( [2m/2^k, (2m+1)/2^k ] \bigr) \, = \,
   3\ts f^{}_{0} \Bigl(\myfrac{2^k+2m+1}{2^{k+1}}\Bigr)-
   3\ts f^{}_{0} \Bigl(\myfrac{2^k+2m}{2^{k+1}}\Bigr).
\end{equation}
We can now use the dilation equation for ${f}$ to determine the value
of the right-hand side of \eqref{eq:fdiffm} based on the binary
expansion of the numerators of the dyadic rationals involved. 
In particular, if $b_i\in\{0,1\}$ for $i \in \{0,1,\ldots, k\}$,  we have
\[
\begin{split}
  {f}  & \left( \frac{b^{}_{k} 2^k + b^{}_{k-1} 2^{k-1} +
        \cdots + b^{}_1 \ts 2 + b^{}_0}{2^{k+1}}\right)  \\[3mm]
   & \quad\quad = \, \left\{ \begin{array}{ll}
        \dfrac{1}{6} \left(\begin{matrix}
        1 & 0\\ 1& 1\end{matrix}\right) \left(\begin{matrix} 
        1 \\ 1 \end{matrix}\right)
        + \dfrac{1}{3} \left(\begin{matrix} 
            1 & 1\\ 0& 1\end{matrix}\right)
        {f}\left( \dfrac{b^{}_{k-1} 2^{k-1}+\cdots+b^{}_1 \ts 2 + b^{} _0}
             {2^{k}}\right), & \mbox{if $b^{}_k=1$}  \\[5mm]
    \dfrac{1}{3}\left(\begin{matrix} 1 & 0\\ 1& 1\end{matrix}\right)
    {f}\left(\dfrac{b^{}_{k-1} 2^{k-1} + \cdots + b^{}_1 \ts 2 + b^{}_0}
     {2^{k}}\right), & \mbox{if $b^{}_k=0$} \end{array}\right. \\[3mm]
    &\quad\quad=\dfrac{b^{}_k}{6} \left(\begin{matrix} 
     1 & 0\\ 1& 1\end{matrix}\right)\left(\begin{matrix}
      1 \\ 1 \end{matrix}\right)+\left[\dfrac{b^{}_k}{3}
      \left(\begin{matrix} 1 & 1\\ 0& 1\end{matrix}\right)
      +\dfrac{1-b^{}_k}{3}\left(\begin{matrix} 
       1 & 0\\ 1& 1\end{matrix}\right)\right] {f}
      \left(\dfrac{b^{}_{k-1} 2^{k-1} + \cdots + b^{}_1
        \ts 2 + b^{}_0}{2^{k}}\right)\\[3mm]
     &\quad\quad=\dfrac{b^{}_k}{6}\left(\begin{matrix} 
     1 & 0\\ 1& 1\end{matrix}\right)\left(\begin{matrix} 1 \\
      1 \end{matrix}\right)+\dfrac{1}{3}\left(\begin{matrix} 
      1 & 1\\ 0& 1\end{matrix}\right)^{b^{}_k}\left(\begin{matrix} 
     1 & 0\\ 1& 1\end{matrix}\right)^{1-b^{}_k} {f}
     \left(\dfrac{b^{}_{k-1} 2^{k-1} + \cdots + b^{}_1 
       \ts 2 + b^{}_0}{2^{k}}\right).
\end{split}
\]
Note next that
\[
   \left(\begin{matrix} 1 & 1\\ 0&
    1\end{matrix}\right)^{b^{}_k}\left(\begin{matrix} 1 & 0\\ 1&
    1\end{matrix}\right)^{1-b^{}_k}=\, {S}^{}_{b^{}_k} \ts ,
\]
where ${S}^{}_{b^{}_k}$ is as given in the linear representation of
$s(n)$. Thus setting
\[
   {u}^{}_{b_k} \, := \, \dfrac{b^{}_k}{2}
    \left(\begin{matrix} 1 & 0\\ 1& 1\end{matrix}\right)
   \left(\begin{matrix} 1 \\ 1 \end{matrix}\right)
   \, = \, \dfrac{b^{}_k}{2} \left(\begin{matrix} 1 \\ 2
   \end{matrix}\right),
\]
we have 
\begin{equation}\label{eq:ASk}
    \left(\begin{matrix} {f}\left(\frac{b^{}_k 2^k +
   \cdots + b^{}_1 2 + b^{}_0}{2^{k+1}}\right)\\ 1\end{matrix}\right)
   =\myfrac{1}{3}\left(\begin{matrix} {S}^{}_{b_k} & 
   {u}^{}_{b_k}\\ 0\ \ \ 0 & 3 \end{matrix}\right)
    \left(\begin{matrix} {f}\left(\frac{b^{}_{k-1} 2^{k-1}+
    \cdots + b^{}_1 2 + b^{}_0 }{2^{k}}\right)\\ 1\end{matrix}\right).
\end{equation}
Iterating Eq.~\eqref{eq:ASk} we get
\begin{equation}\label{eq:fSAf}
   \left(\begin{matrix} {f}\left(\frac{b^{}_k 2^k +
   \cdots + b^{}_1 2 + b^{}_0 }{2^{k+1}}\right)\\ 1\end{matrix}\right)
   =\myfrac{1}{3^k}\left(\begin{matrix} {S}^{}_{b_k} & {u}^{}_{b_k}\\
    0\ \ \ 0 & 3 \end{matrix}\right)
   \cdots\left(\begin{matrix} {S}^{}_{b_{1}} & {u}^{}_{b_{1}}\\ 
    0\ \ \ 0 & 3\end{matrix}\right)\left(\begin{matrix}
    {f}\left(\frac{b^{}_0}{2} \right)\\ 1\end{matrix}\right).
\end{equation}

For the integer $2m\in[0,2^{k})$, consider the binary expansions 
$2^k+2m+1=1b_{k-1}\cdots b_11$ and  $2^k+2m=1b_{k-1}\cdots
b_10$ in obvious notation. Then, using 
Eqs.~\eqref{eq:fdiffm} and \eqref{eq:fSAf} and
observing that $f \bigl( \frac{1}{2}\bigr) = \bigl(
\frac{1}{6}, \frac{1}{3} \bigr)^{T}$, we have
\begin{align*}
   \mu([2m/2^k, (2m+1)/2^k)]) \,
   &= \, (3^{1-k}, 0 , 0)\left(\begin{matrix} {S}^{}_{1} & 
   {u}^{}_{1}\\ 0\ \ \ 0 & 3 \end{matrix}\right)
   \cdots\left(\begin{matrix} {S}^{}_{b_{1}} & {u}^{}_{b_{1}}\\
    0\ \ \ 0 & 3 \end{matrix}\right)\left[\left(\begin{matrix}
    {f}\left(\frac{1}{2}\right)\\ 1\end{matrix}\right)-\left(
   \begin{matrix} {f}\left(0\right)\\ 1\end{matrix}\right)\right]\\
   &= \, \myfrac{1}{6}
   (3^{1-k}, 0 , 0)\left(\begin{matrix} {S}^{}_{1} & {u}^{}_{1}\\ 
   0\ \ \ 0 & 3 \end{matrix}\right)
   \cdots\left(\begin{matrix} {S}^{}_{b_{1}} & {u}^{}_{b_{1}}\\ 
   0\ \ \ 0 & 3\end{matrix}\right)\left(\begin{matrix}1 \\
    2 \\ 0\end{matrix}\right)\\
   &= \, \myfrac{1}{6}
   (3^{1-k}, 0)\, {S}^{}_{1}\, {S}^{}_{b^{}_{k-1}}\,\cdots\,
   {S}^{}_{b^{}_{1}}\left(\begin{matrix}1 \\ 2 \end{matrix}\right)
   \, \geqslant \, \myfrac{3}{6\cdot3^k} \, > \, 0 \ts ,
\end{align*} which proves the theorem.
\end{proof}

Unlike other distribution functions of singular continuous measures,
the distribution function $F$ from Eq.~\eqref{eq:distmu} looks
relatively `smooth'; this is quantified in the following corollary.

\begin{coro}\label{coro:holder}
  The distribution function $F$ from Eq.~\eqref{eq:distmu} is
  H\"{o}lder continuous with exponent $\log_2(3/\tau)$.
\end{coro}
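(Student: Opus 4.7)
The plan is to exhibit $F$ as an affine transform of the H\"older continuous function $f^{}_0$ from Proposition~\ref{prop:s}, so that the desired H\"older estimate for $F$ follows at once from the one for $f^{}_0$.

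First, I would extend the computation carried out in the proof of Theorem~\ref{thm:strict}. The application of Proposition~\ref{prop:s} there that produced the identity \eqref{eq:fdiffm} for intervals of the form $[2m/2^k, (2m+1)/2^k]$ goes through verbatim with $2m$ replaced by an arbitrary integer $j$ with $0 \leqslant j < 2^k$, since the only property used was that both endpoints lie in $[2^n, 2^{n+1})$. Taking $n \to \infty$, and using the continuity of $\mu$ (Theorem~\ref{thm:cont}) to discard boundary contributions in the weak limit, one obtains, for all such $j$ and $k$,
\[
  \mu\bigl([j/2^k, (j+1)/2^k]\bigr) \, = \, 3\ts f^{}_0 \Bigl(\myfrac{2^k + j + 1}{2^{k+1}} \Bigr) - 3\ts f^{}_0 \Bigl( \myfrac{2^k + j}{2^{k+1}} \Bigr) \ts .
\]
Telescoping this identity over $0 \leqslant i < j$, together with the value $f^{}_0(\tfrac{1}{2}) = \tfrac{1}{6}$ recorded in the proof of Theorem~\ref{thm:strict}, yields $F(j/2^k) = 3\ts f^{}_0\bigl(\tfrac{1}{2} + \tfrac{j}{2^{k+1}}\bigr) - \tfrac{1}{2}$. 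Since $F$ is continuous (by Theorem~\ref{thm:cont}), $f^{}_0$ is continuous (by Proposition~\ref{prop:s}), and the dyadic rationals are dense in $[0,1]$, this identity extends to
\[
   F(x) \, = \, 3\ts f^{}_0 \Bigl( \myfrac{1+x}{2} \Bigr) - \myfrac{1}{2}
\]
for all $x \in [0,1]$.

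The corollary then becomes immediate. Writing $\alpha := \log_2(3/\tau)$ and letting $C$ denote the H\"older constant of $f^{}_0$, one has, for all $x, y \in [0,1]$,
\[
  |F(x) - F(y)| \, = \, 3 \bigl| f^{}_0 \bigl( \tfrac{1+x}{2} \bigr) - f^{}_0 \bigl( \tfrac{1+y}{2} \bigr) \bigr|
  \, \leqslant \, 3 C \bigl( \tfrac{|x-y|}{2} \bigr)^\alpha \, = \, C\tau \ts |x-y|^\alpha \ts ,
\]
where the last equality uses $2^\alpha = 3/\tau$. The substantive step is the first one, namely the identification of $F$ with a dilate of $f^{}_0$; once this identity is in hand, the H\"older bound is automatic. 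The only technical point that needs care is controlling the $O^{}_n$-error from Proposition~\ref{prop:s} under the limit $n\to\infty$, but this error is uniform in the dyadic endpoints considered and therefore causes no trouble.
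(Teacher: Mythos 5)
Your proposal is correct and follows essentially the same route as the paper: the paper's proof likewise transfers the H\"older exponent from $f^{}_0$ to $F$ by rerunning the computation of Theorem~\ref{thm:strict} up to Eq.~\eqref{eq:fdiffm} for general subintervals of $[0,1]$, and your identity $F(x) = 3\ts f^{}_0\bigl(\tfrac{1+x}{2}\bigr) - \tfrac{1}{2}$ is exactly the relation recorded in the corollary that follows. Your variant of deriving it first for dyadic intervals and then extending by continuity and density is only a cosmetic difference.
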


\begin{proof} 
  The distribution function $F$ inherits this exponent from the
  dilation equation for the assiciated function $f$. One can see this
  by following through the proof of Theorem~\ref{thm:strict} up to
  Eq.~\eqref{eq:fdiffm} using the interval $[x,y]\subseteq[0,1]$.
\end{proof}

Further, it will be an interesting question to analyse some of the
scaling properties of $\mu$, for instance in analogy to the treatment
of the Thue--Morse measure in \cite{BGN}.  For this, it will be
helpful to understand the precise relation between the measure $\mu$
and the dilation equation from Proposition~\ref{prop:s}. One such
relation can be stated as follows.

\begin{coro}
  For\/ $x\in [0,1]$, the distribution function\/ $F$ from
  Eq.~\eqref{eq:distmu} satisfies
\[
   F (x) \, = \, 3 \left( f^{}_{0} \bigl( \tfrac{1+x}{2} \bigr)
   - f^{}_{0} \bigl( \tfrac{1}{2} \bigr) \right) \, = \,
   f^{}_{0} (x) + f^{}_{1} (x) \ts ,
\]
  where\/ $f^{}_{0}$ and\/ $f^{}_{1}$ are
  the functions from Proposition~\emph{\ref{prop:s}}.
\end{coro}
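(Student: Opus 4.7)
The plan is to establish the first equality by extending the calculation that led to Eq.~\eqref{eq:fdiffm} to arbitrary subintervals of $[0,1]$, and then to derive the second equality from the dilation equation satisfied by $f$.

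For the first equality, I would repeat the derivation of Eq.~\eqref{eq:fdiffm} with a generic interval $[a,b] \subseteq [0,1]$ in place of the special dyadic interval $[2m/2^k, (2m+1)/2^k]$. Applying Proposition~\ref{prop:s} to the summatory function of $s$ over the range $2^n(1+a) \leqslant \ell \leqslant 2^n(1+b)$ and letting $n \to \infty$ (with the error term $O \bigl( 2^{n(\log_2 \tau + \varepsilon)}\, 3^{-n} \bigr)$ being $o_n(1)$), combined with the continuity of $\mu$ from Theorem~\ref{thm:cont} so that the endpoints carry no mass, yields
\[
   \mu\bigl( [a,b] \bigr) \, = \, 3 \left( f^{}_0 \bigl( \tfrac{1+b}{2} \bigr)
   \, - \, f^{}_0 \bigl( \tfrac{1+a}{2} \bigr) \right);
\]
this generalisation was already announced in the proof of Corollary~\ref{coro:holder}. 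Specialising to $a = 0$, $b = x$ and using $F(x) = \mu\bigl( [0,x] \bigr)$ gives the first equality of the claim.

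For the second equality, I would apply the dilation equation from Proposition~\ref{prop:s} at the point $t = (1+x)/2 \in \bigl[ \tfrac{1}{2}, 1 \bigr]$. Here $2t = 1+x \geqslant 1$, so the boundary condition gives $f(2t) = \bigl( \tfrac{1}{2}, \tfrac{1}{2} \bigr)^T$, while $2t - 1 = x \in [0,1]$ gives $f(2t-1) = f(x)$. A short matrix computation using the $S^{}_0, S^{}_1$ from Eq.~\eqref{eq:linrec} then produces
\[
   f^{}_0 \bigl( \tfrac{1+x}{2} \bigr) \, = \,
   \tfrac{1}{6} \, + \, \tfrac{1}{3} \bigl( f^{}_0 (x) + f^{}_1 (x) \bigr).
\]
Taking $x = 0$ and using $f(0) = 0$ gives $f^{}_0 \bigl( \tfrac{1}{2} \bigr) = \tfrac{1}{6}$, so substituting both relations into the first equality yields $F(x) = f^{}_0(x) + f^{}_1(x)$.

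The only real obstacle is the extension of Eq.~\eqref{eq:fdiffm} from dyadic to arbitrary $[a,b] \subseteq [0,1]$. Since the dyadic rationals are dense and both sides of the target identity are continuous in the endpoints --- H\"older continuity of $f^{}_0$ being provided by Proposition~\ref{prop:s} and continuity of $F$ by Theorem~\ref{thm:cont} --- this extension causes no difficulty. Once it is available, the remaining deduction amounts to a direct $2 \times 2$ matrix calculation with $S^{}_0$ and $S^{}_1$.
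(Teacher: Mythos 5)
Your argument is correct and follows essentially the same route as the paper: the first identity by running the computation behind Eq.~\eqref{eq:fdiffm} for the interval $[0,x]$ (the paper leaves the extension from dyadic intervals implicit, which you justify via the error-term estimate, continuity of $\mu$, and density of dyadic points), and the second identity by evaluating the dilation equation at $t=\tfrac{1+x}{2}$ with the boundary values $f(1+x)=\bigl(\tfrac12,\tfrac12\bigr)^T$ and $f^{}_0\bigl(\tfrac12\bigr)=\tfrac16$. No gaps; your write-up merely makes explicit the steps the paper compresses into ``a rather direct consequence of Eq.~\eqref{eq:fdiffm}''.
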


\begin{proof}
  The first identity is a rather direct consequence of
  Eq.~\eqref{eq:fdiffm} in the proof of Proposition~\ref{prop:s}.
  While $f^{}_{0} \bigl( \frac{1}{2}\bigr) = \frac{1}{6}$, the
  dilation equation for the functions $f_i$ gives
\[
   f^{}_{0} \bigl( \tfrac{1+x}{2} \bigr) \, = \,
   \myfrac{1}{6} + \myfrac{1}{3} \bigl( f^{}_{0} (x)
   + f^{}_{1} (x) \bigr) .
\]
  This implies the second identity.
\end{proof}

\section*{Appendix}

The purpose of this appendix is to provide two other proofs of the
continuity of the measure $\mu$ from Proposition~\ref{prop:gen}, in
view of their potential usefulness in other applications to recursive
sequences of a similar kind. \smallskip

The first one starts from the observation that
$\mu = \medconv_{m\geqslant 1} \nu^{}_{m}$, with the probability
measures
$\nu^{}_{m} = \frac{1}{3} \bigl( \delta^{}_{0} + \delta^{}_{2^{-m}} +
\delta^{}_{-2^{-m}}\bigr)$
on $\TT$ as building blocks, is absolutely convergent in the weak
topology, which is to say that that it is weakly convergent to the
same limit in any order of its terms.  This follows from
\cite[Thm.~6]{JW}, where one has to notice that
\[
  M_{r} (\nu^{}_{m}) \, := \int_{\TT} x^{r} \dd \nu^{}_{m} (x)
  \, = \, \myfrac{1}{3} \bigl( 0^r + 2^{-rm} + (-1)^{r} 2^{-rm} \bigr) 
\]
for $r\geqslant 0$. In particular, $M_1 (\nu^{}_{m}) = 0$ and
$M_2 (\nu^{}_{m}) = \frac{2}{3}\ts 4^{-m}$, so both
$\sum_{m=1}^{\infty} \lvert M_1 (\nu^{}_{m}) \rvert$ and
$\sum_{m=1}^{\infty} M_2 (\nu^{}_{m})$ are finite, which ensures
absolute convergence.

If we now assume, contrary to the claim, that $\mu$ fails to be
continuous, there must be an $x\in\TT$ with $\mu(\{ x\}) >0$.  Now,
rewrite $\mu$ as $\mu = \nu^{}_{n} * \rho^{}_{n}$ with
$\rho^{}_{n} := \medconv_{m\ne n} \nu^{}_{m}$, which is possible for
any $n\in\NN$. This implies
\[
   \mu (\{ x \}) \, = \, \bigl(\nu^{}_{n} * \rho^{}_{n}\bigr)
   (\{ x \} ) \, = \, \myfrac{1}{3} \bigl( \rho^{}_{n} (\{ x \})
   + \rho^{}_{n} (\{ x + 2^{-n}\}) + \rho^{}_{n} (\{ x - 2^{-n}\})
   \bigr) .
\]
Analogously, one obtains
\[
  \mu (\{ x \pm 2^{-n}\}) \,  = \, \myfrac{1}{3} \bigl( 
  \rho^{}_{n} (\{ x \}) + \rho^{}_{n} (\{ x \pm 2^{-n}\}) 
   + \rho^{}_{n} (\{ x \pm 2 \cdot 2^{-n}\}) \bigr).
\]
Together with the previous relation, this implies the
estimate\footnote{More generally, one has the relation
  $\nu^{}_{n} \leqslant \bigl(\delta^{}_{2^{-n}} +
  \delta^{}_{-2^{-n}}\bigr)*\nu^{}_{n}$
  as an inequality between positive measures, and hence --- by
  convolution with $\rho^{}_{n}$ --- also
  $\mu \leqslant \bigl(\delta^{}_{2^{-n}} + \delta^{}_{-2^{-n}}
  \bigr)* \mu$, which implies Eq.~\eqref{eq:JW-trick}. }
\begin{equation}\label{eq:JW-trick}
   \mu(\{x\}) \, \leqslant \, \mu (\{ x + 2^{-n}\})
   + \mu (\{ x - 2^{-n}\}) \ts .
\end{equation}

Now, choose $r\in\NN$ with $\mu(\{x\}) > \frac{1}{r}$, and select
$r$ integers $j^{}_{1} < j^{}_{2} < \ldots < j^{}_{r}$ with $j^{}_{1}
\geqslant 2$. Since $\mu$ is a probability measure on $\TT$, we
then get
\[
\begin{split}
  1 \, & \geqslant \, \mu  \biggl( 
   \dot{\bigcup_{1\leqslant  q\leqslant  r}} \bigl(
     \big\{x-2^{- j_q} \big\}\, \dot{\cup}\,
     \big\{x+2^{- j_q} \big\}\bigr) \biggr) \\
    & = \, \sum_{q=1}^r \Bigl( \mu \bigl(  
     \big\{x-2^{- j_q} \big\}\bigr) +
     \mu \bigl( \big\{x+2^{- j_q} \big\} \bigr)  
     \Bigr)  \,  > \, r \ts \mu \bigl(\{x\}\bigr) 
    \; \geqslant \: r \, \myfrac{1}{r} \; = \; 1 \ts .
\end{split}
\]
This contradiction shows that $\mu$ is continuous. \smallskip

The second alternative proof employs Wiener's criterion
again. Observing (without proof) that the inequalities
\begin{equation}\label{eq:more}
  \lvert \widehat{\mu} (2k+1)\rvert \, \leqslant \, 
  \frac{1}{2} \lvert \widehat{\mu} (k) + \widehat{\mu} 
  (k+1)\rvert 
  \quad \text{and} \quad
  \widehat{\mu} (2k+1) \bigl( \widehat{\mu} (2k) 
  + \widehat{\mu} (2k+2)\bigr) \, \leqslant \, 0
\end{equation}
hold for all $k\in\ZZ$, one can proceed as follows. 
With $\varSigma(N) := \sum_{k=-N}^{N} \widehat{\mu} (k)^2$, 
one has
\[
\begin{split}
  \varSigma(4N) \, & = \sum_{k=-2N}^{2N} \widehat{\mu} (2k)^2
   \, + \! \sum_{k=-2N}^{2N-1} \widehat{\mu} (2k+1)^2 
      \, \leqslant \, \varSigma(2N) \, + \,
    \myfrac{1}{4} \sum_{k=-2N}^{2N-1} \bigl( 
    \widehat{\mu} (k) + \widehat{\mu} (k+1)\bigr)^2 \\
  & = \, \myfrac{3}{2} \varSigma(2N)  \, - \, 
    \myfrac{\widehat{\mu} (2N)^2}{2}
    \, + \, \myfrac{1}{2} \sum_{k=-2N}^{2N-1} \widehat{\mu} (k)
     \, \widehat{\mu} (k+1) \\
  & \leqslant \,  \myfrac{3}{2} \varSigma(2N) \, + \,
    \myfrac{1}{2} \sum_{k=-N}^{N-1} \widehat{\mu} (2k+1) 
    \bigl( \widehat{\mu} (2k) + \widehat{\mu} (2k+2)\bigr)
   \, \leqslant \, \myfrac{3}{2} \varSigma(2N) \ts ,
\end{split}
\]
where Lemma~\ref{lem:FB} was used several times,
while Eq.~\eqref{eq:more} was needed for the first
and the last inequality.

This estimate implies
$\varSigma(2^{k+1})\leqslant \bigl(\frac{3}{2}\bigr)^{k}\varSigma(2)$
and thus also
$\varSigma(N) \leqslant C \ts ( \frac{3}{2} )_{}^{\log_{2} (N)}$ for
some positive constant $C$. With $\alpha = \log_{2} (3/2) <1$, one
then obtains the asymptotic behaviour
$\frac{1}{N} \varSigma(N) = O(1/N^{1-\alpha})$ as $N\to\infty$, which
implies the absence of pure point components in $\mu$ by Wiener's
criterion. This approach has the advantage (over the Jessen--Wintner
argument) that one also gets a lower bound on the H\"{o}lder exponent
from Corollary~\ref{coro:holder}

\section*{Acknowledgements}
It is our pleasure to thank Uwe Grimm, Jeff Hogan, Bj\"{o}rn
R\"{u}ffer, Timo Spindeler and Wadim Zudilin for helpful discussions.
This work was supported by the German Research Council (DFG), within
the CRC 701, and by the Australian Research Council, through grant
DE{\ts}140100223.

\end{document}